\let\@fnsymbol\@arabic
\theoremstyle{plain}
\newtheorem{theorem}{\bf Theorem}[section]
\newtheorem{lemma}[theorem]{Lemma}
\newtheorem{proposition}[theorem]{Proposition}
\theoremstyle{definition}
\newtheorem{example}[theorem]{Example}
\newtheorem{remark}[theorem]{Remark}
\newtheorem*{theorem*}{\bf Theorem}
\newcommand{\ini}{\operatorname{in} }
\newcommand{\NN}{\mathbb{N}}
\newcommand{\Lex}{\operatorname{Lex} }
\newcommand{\height}{\operatorname{height} }
\newcommand{\mm}{\mathfrak{m}}
\newcommand{\reg}{\operatorname{reg} }
\definecolor{mypink}{RGB}{215, 5, 234}
\newcommand{\projdim}{\mathrm{projdim}}
\begin{document}

\title{Regularity  of primes associated with polynomial parametrisations} 

\author[F.Cioffi]{Francesca Cioffi}
\address{Dipartimento di Matematica e Applicazioni dell'Universit\`{a} di Napoli Federico II\\ via Cintia, 80126  Napoli, Italy
         }
\email{{cioffifr@unina.it}}

\author[A. Conca]{Aldo Conca}
\address{Dipartimento di Matematica dell'Universit\`{a} di Genova\\ 
         Via Dodecaneso 35, 
         16146 Genova, Italy}
\email{{conca@dima.unige.it}}

\begin{abstract} 
We prove a  doubly exponential  bound for  the  Castelnuovo-Mumford regularity  of prime ideals   defining  varieties with polynomial parametrisation.  
\end{abstract}

 \thanks{Both authors are supported by INdAM-GNSAGA. MSC classification: 13D02, 13P10}

\maketitle

 \section{Introduction} 
 Let $I$ be a homogeneous ideal in the polynomial ring $R=K[x_1,\dots, x_n]$ over a field~$K$. The Castelnuovo-Mumford regularity $\reg(I)$ of $I$ has been introduced in \cite{EG} and \cite{O} as an algebraic counterpart of the corresponding  notion introduced by Mumford \cite{M} for coherent sheaves over projective spaces. It quickly became one of the most important homological and cohomological invariants of $I$. It is defined in terms of the graded Betti numbers $\beta_{i,j}(I)$ as 
$$\reg(I)=\max\{ j-i :  \beta_{i,j}(I)\neq 0\}$$ 
as well as in terms of the graded local cohomology modules $H_\mm^i(I)$ as 
$$\reg(I)=\max\{ j+i :  H_\mm^i(I)_j\neq 0\},$$ 
where $\mm=(x_1,\dots, x_n)$. 
It is known \cite{BM,CS} that $\reg(I)\leq (2u)^{2^{n-2}}$, where $u$ is the largest degree of a generator of $I$. When $P$ is a prime homogeneous ideal one expects better bounds for $\reg(P)$,  see \cite{BM} and the recent \cite{MP2} for an overview. On the other hand, McCullough  and Peeva   proved in \cite{MP} that $\reg(P)$  cannot be bounded above by any polynomial in the degree (or multiplicity)   $\deg(R/P)$ hence disproving the long--standing  Eisenbud-Goto conjecture \cite{EG}. Later on  Caviglia,  Chardin,  McCullough,  Peeva and Varbaro \cite{CCMPV} proved that   $\reg(P)$ can be actually bounded by (a highly exponential) function in $\deg(R/P)$.

 In this short note we prove a  doubly exponential   bound for $\reg(P)$ when $P$   defines a  variety with a polynomial parametrisation that does not involve the degree of the generators of $P$  but  only the numerical data of the parametrisation.     
 
 \begin{theorem} 
 \label{main} 
 Let $P$ be the kernel of a $K$-algebra map $\phi: K[x_1,\dots, x_n] \to K[y_1,\dots, y_m]$ with $\phi(x_i)=f_i$ homogeneous polynomials of degree $d>0$.  Then one has: 
 $$\reg(P)\leq d^{n2^{m-1}-1}.$$
 \end{theorem}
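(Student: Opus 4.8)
The idea is to present $P$ as the contraction of an explicit complete intersection and then to control the behaviour of Castelnuovo--Mumford regularity under the elimination of variables, carried out one at a time.

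First the set-up. Put $S=K[y_1,\dots,y_m,x_1,\dots,x_n]$ and $H=(x_1-f_1,\dots,x_n-f_n)\subseteq S$. The $K$-algebra map $S\to K[y_1,\dots,y_m]$ with $x_i\mapsto f_i$ and $y_j\mapsto y_j$ is surjective with kernel $H$ and has target of dimension $m=\dim S-n$; hence the generators of $H$ form a regular sequence, $S/H\cong K[y_1,\dots,y_m]$, and, identifying $R=K[x_1,\dots,x_n]$ with $K[x]\subseteq S$, one has $P=H\cap R$. Since each $f_i$ is homogeneous of degree $d$, the ideal $H$ is homogeneous for the grading of $S$ with $\deg y_j=1$, $\deg x_i=d$; in this grading $H$ is a complete intersection of $n$ forms of degree $d$, so its minimal free resolution is the Koszul complex and every numerical invariant of $H$ is linear in $n$ and $d$ — in particular no global (doubly exponential, ambient-dimension-sized) regularity bound is needed for $H$ itself. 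A bookkeeping step then transfers this weighted-degree information to the standard grading on $R$, using that every $x$-monomial has weighted degree divisible by $d$.

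From $H$ one reaches $P$ by eliminating $y_m,y_{m-1},\dots,y_1$ in turn; equivalently, consider the tower of graded subrings $B_j=K[f_1,\dots,f_n,y_1,\dots,y_j]\subseteq K[y_1,\dots,y_m]$, so that $B_m=K[y_1,\dots,y_m]$ is the ``trivial'' end (carrying the Koszul resolution of $H$) and $B_0=K[f_1,\dots,f_n]\cong R/P$ is the end we want, each step $B_j\rightsquigarrow B_{j-1}$ being the elimination of a single variable. The engine is a quantitative estimate for the change of regularity under the passage from a homogeneous ideal $J\subseteq A[z]$ to $J\cap A$: such a passage increases the regularity at most quadratically, up to an absolute factor, and independently of $\dim A$. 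One also notes that only those steps at which the relative dimension genuinely drops can raise the regularity — a ``transcendental'' step, where $y_j$ is transcendental over the fraction field of $B_{j-1}$, merely extends the defining ideal by a polynomial variable and leaves the regularity unchanged — and that the number of ``algebraic'' steps is exactly $\dim B_0=\dim(R/P)\le\min(n,m)\le m$, because the $\dim B_j$ increase by $0$ or $1$ and telescope from $c$ to $m$. Iterating the per-step estimate over these $\le m$ effective steps, starting from the linear-in-$(n,d)$ data of $H$, and tracking the base of the exponentiation together with the fact that the dependence on $n$ stays linear (it enters only through the Koszul data of $H$, never through an ambient-variable count), yields $\reg(P)\le d^{\,n2^{m-1}-1}$. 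The same mechanism admits an equivalent phrasing: $R/P$ is the fibre cone of the ideal $I=(f_1,\dots,f_n)$ of $K[y_1,\dots,y_m]$, and one is bounding the regularity of this fibre cone in terms of $\reg_{K[y]}(I)$, controlled in $m$ variables, together with the number $n$ of generators.

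The main obstacle is precisely this per-step estimate and the accompanying bookkeeping. A direct appeal to global degree bounds (Dub\'e, or Bayer--Mumford applied in $n+m$ variables) is far too lossy: it produces a bound doubly exponential in $n+m$ rather than in $m$ alone. What is required is a bound on $\reg(J\cap A)$ purely in terms of $\reg_{A[z]}(J)$, with no reference to the number of variables of $A$; feeding this into the $\le m$-fold iteration and arranging the constants so that the exponent is exactly $n2^{m-1}-1$ — in particular keeping the $n$-dependence linear rather than exponential — is where the real work lies. The non-properness of the parametrisation (the base locus $V(f_1,\dots,f_n)\subseteq\PP^{m-1}$, where $\phi$ fails to be a morphism) is absorbed automatically into the elimination, but it is the reason one cannot simply invoke the cleaner regularity bounds available for finite morphisms.
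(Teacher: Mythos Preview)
Your setup --- realising $P$ as $H\cap R$ with $H=(x_i-f_i)$ a complete intersection in $S=K[x,y]$, and handling the mismatch between standard and weighted gradings --- matches the paper's. But the core of your strategy is a dimension-free per-step elimination estimate (``$\reg(J\cap A)$ is at most quadratic in $\reg_{A[z]}(J)$, independent of $\dim A$''), which you yourself flag as unproved. This is a genuine gap: no such estimate is standard, and Example~2.3 in the paper already shows that $\reg(J\cap A)$ can exceed $\reg(J)$, so any bound would have to go through some auxiliary invariant. Without that lemma your iteration never gets off the ground, and the shape $d^{n2^{m-1}}$ remains unexplained.

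The paper takes a different route that avoids this obstacle entirely. Rather than a weighted grading, it applies the flat endomorphism $\alpha:x_i\mapsto x_i^d$ to pass from $P$ to $P'=\alpha(P)R$; by Lemma~\ref{regflat} one has $\reg(P)\le\reg(P')/d$, and by Lemma~\ref{poweli} $P'=J'\cap R$ with $J'=(x_i^d-f_i)$ now a \emph{standard-graded} complete intersection of $n$ forms of degree $d$ in $n+m$ variables. The elimination is then handled in one shot, not variable by variable: Proposition~\ref{regbound} gives $\reg(P')\le\reg(\Lex(J'))$ via the chain $\reg(P')\le\reg(\ini P')\le\reg(\ini J')\le\reg(\Lex(J'))$ (algebra retract plus Bigatti--Hulett--Pardue). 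Since the Hilbert function of $J'$ depends only on $(n,d,m)$, so does $\reg(\Lex(J'))$, and the closed-form bound $\reg(\Lex(J'))\le d^{n2^{m-1}}$ is precisely \cite[Cor.~3.4]{CM}. Dividing by $d$ gives the theorem. In short, the doubly-exponential shape in $m$ does not come from iterating a quadratic elimination bound; it is inherited wholesale from the Chardin--Moreno-Soc\'ias formula for the regularity of the lex-segment ideal with the Hilbert function of a complete intersection.
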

 
 We remark that a ``combinatorial" bound for  $\reg(P)$ in the case of  curves (i.e. $m=2$) with a monomial parametrisation  has been obtained in \cite[Prop.~5.5]{L} and that it would be very interesting to obtain similar bounds for  higher dimensional toric ideals.  We thank two anonymous referees for helpful comments and for suggesting that  a bound comparable with  that of Theorem \ref{main} might be obtained  using the techniques and the ideas of \cite{CCMPV} combined with other estimates on the regularity and on the degree.

\section{Flat extensions, regularity and elimination} 

For the proof of  Theorem \ref{main} we  need to collect some ingredients. 

\subsection{Regularity and flat extensions}
 Let  $R=K[x_1,\dots, x_n]$ with its standard graded structure.  Let  $d$ be a positive integer and let  $\alpha:R\to R$ be the $K$-algebra map defined by  $\alpha(x_i)=x_i^d$ for $i=1,\dots, n$.  
For a homogeneous ideal $I$ of $S$ we set $I'= \alpha(I)R$. By construction $I'$ is homogeneous and we have: 

\begin{lemma} 
\label{regflat} 
 $\reg(I)\leq  \reg(I')/d.$
\end{lemma}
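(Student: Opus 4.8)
The plan is to prove the equivalent inequality $\reg(I')\ge d\,\reg(I)$ by transporting the minimal graded free resolution of $I$ to one of $I'$. Although $\alpha$ is not graded for the standard grading, it becomes a degree-$0$ graded homomorphism once we regrade the source: let $\tilde R$ denote $K[x_1,\dots,x_n]$ with the grading $\deg x_i=d$, and let $\tilde I$ be $I$ regarded as a homogeneous ideal of $\tilde R$. Then $\alpha\colon\tilde R\to R$ is a graded $K$-algebra map of degree $0$, and $R$ is a free graded $\tilde R$-module with basis the monomials $x^a$ with $0\le a_i\le d-1$, placed in degrees $|a|$; in particular $R$ is flat over $\tilde R$. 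Since $\alpha$ is injective, flatness gives a graded isomorphism $\tilde I\otimes_{\tilde R}R\cong\alpha(\tilde I)R=\alpha(I)R=I'$.

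Next I would take the minimal graded free resolution $\tilde F_\bullet\to\tilde I$ over $\tilde R$ and apply $-\otimes_{\tilde R}R$. By flatness this produces a graded free resolution $\tilde F_\bullet\otimes_{\tilde R}R\to I'$ over $R$, and the crux of the argument is that this resolution is \emph{already minimal}. Indeed, the matrix entries of the differentials of $\tilde F_\bullet$ lie in the maximal graded ideal $(x_1,\dots,x_n)\tilde R$, and $\alpha$ carries these into $(x_1^d,\dots,x_n^d)R\subseteq\mm$; hence every differential entry of $\tilde F_\bullet\otimes_{\tilde R}R$ lies in $\mm$, which is precisely the minimality condition over the standard graded ring $(R,\mm)$. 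This is the one place where the hypothesis $d\ge 1$, in the guise $\alpha\big((x_1,\dots,x_n)\tilde R\big)\subseteq\mm$, is used.

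Finally one reads off graded Betti numbers. Rescaling a grading by the factor $d$ — placing the standard degree-$j$ part of a module in degree $dj$ — is an exact operation sending $R(-j)$ to $\tilde R(-dj)$ and $K$ to $K$, so applied to the minimal resolution of $I$ over $R$ it yields the minimal resolution of $\tilde I$ over $\tilde R$; thus $\beta_{i,dj}^{\tilde R}(\tilde I)=\beta_{i,j}^{R}(I)$ and $\beta_{i,k}^{\tilde R}(\tilde I)=0$ for $d\nmid k$. Therefore $\tilde F_i\otimes_{\tilde R}R\cong\bigoplus_j R(-dj)^{\beta_{i,j}^{R}(I)}$, and minimality of this resolution of $I'$ gives $\beta_{i,k}^{R}(I')=\beta_{i,k/d}^{R}(I)$ if $d\mid k$ and $0$ otherwise. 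Consequently
\[
\reg(I')=\max\{\,dj-i:\beta_{i,j}^{R}(I)\neq 0\,\}\ \geq\ \max\{\,d(j-i):\beta_{i,j}^{R}(I)\neq 0\,\}=d\,\reg(I),
\]
the middle inequality being just $di\ge i$ for $i\ge 0$; dividing by $d$ proves the lemma. (Alternatively one could argue cohomologically, via flat base change $H^\ell_{(x_1,\dots,x_n)\tilde R}(\tilde I)\otimes_{\tilde R}R\cong H^\ell_{\mm}(I')$ and the decomposition $R\cong\bigoplus_a\tilde R(-|a|)$, but the resolution argument is cleaner.) The main obstacle I anticipate is essentially bookkeeping: keeping the graded shifts consistent through the regrading $\tilde R\to R$ while verifying that flatness together with $\alpha\big((x_1,\dots,x_n)\tilde R\big)\subseteq\mm$ forces the base-changed resolution to stay minimal. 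Note that equality need not hold in the lemma, precisely because $\alpha$ multiplies internal degrees by $d$ but leaves the homological degree $i$ untouched.
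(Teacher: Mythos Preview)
Your proposal is correct and follows essentially the same route as the paper: both arguments use flatness of $\alpha$ to transport the minimal graded free resolution of $I$ to a minimal resolution of $I'$, read off the Betti number identity $\beta_{i,dj}(I')=\beta_{i,j}(I)$, and then use $dj-i\ge d(j-i)$ to conclude $\reg(I')\ge d\,\reg(I)$. Your presentation is simply more explicit about the grading bookkeeping (introducing $\tilde R$ and verifying freeness and minimality directly), whereas the paper cites flatness from the literature and states the Betti relation without the intermediate regrading step.
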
 
\begin{proof} 
By \cite{H}  the map $\alpha$ is flat. Hence  if $F$ is a minimal graded free resolution of $I$   then $\alpha(F)R$ is a  minimal graded free resolution of $I'$.  Therefore the graded  Betti numbers of $I$ and $I'$  are related as follows: $\beta_{i,jd}(I')=\beta_{i,j}(I)$ for all $i,j$ and $\beta_{i,j}(I')=0$ if $d$ does not divide $j$.  For $i=0,\dots, \projdim(I)$  set $t_i(I)=\max\{ j : \beta_{i,j}(I)\neq 0\}$. Then we have  $t_i(I')=dt_i(I)$. By definition  $\reg(I)=\max\{ t_i(I)-i :  i=0,\dots, \projdim(I)\}$. Let $p$ be the largest integer $i$ such that $\reg(I)=t_i(I)-i$. 
Then    
\begin{equation*}
\reg(I')\geq t_p(I')-p=dt_p(I)-p =d(\reg(I)+p)-p=d\reg(I)+p(d-1).
\end{equation*} 
It follows that 
\begin{equation}
\label{eq1} 
\reg(I')/d\geq \reg(I)+p(d-1)/d
\end{equation} 
and, in particular, 
$$\reg(I')/d\geq \reg(I).$$
 \end{proof} 
 
 \begin{remark} The proof of Lemma \ref{regflat} shows that  the inequality in Lemma \ref{regflat} is strict unless $d=1$ (which is obvious) or the index $p$ defined in the proof is $0$ and this happens only if $I$ is principal.  
If $R/I$ is Cohen-Macaulay then $p=\height(I)-1$ and in Eq.~\eqref{eq1} one has equality. For example if $I$ is a complete intersection of $c$ forms of degree $s$ then $\reg(I)=sc-(c-1)$ and $\reg(I')=dsc-(c-1)$ so that  
 $$\reg(I')/d-\reg(I)=(c-1)(d-1)/d.$$
 In general however in  Eq.~\eqref{eq1} one does not have equality. For example for an ideal $I$ with $\projdim(I)=4$ and 
 \begin{equation}
 \label{seqt} 
 (t_0(I), t_1(I), t_2(I), t_3(I), t_4(I))=(2, 4, 5, 5, 6)
 \end{equation} 
  one has $\reg(I)=3$, $p=2$ and  $\reg(I')=6d-4$ for $d>1$.  Hence  the inequality in Eq.~\eqref{eq1} is strict for $d>2$. An ideal with   invariants as in \eqref{seqt} is, for example, $I=(x_1)(x_1,x_2,\dots, x_5)+(x_2^2,x_3^2)$. 
 \end{remark} 
   
  \subsection{Flat extensions and elimination}
Now let $\ell\geq n$ and $S=K[x_1,\dots,x_\ell]$. Let $J$ be an ideal  of $S$  and $I=J\cap R$. 
 Let $d_1,\dots, d_\ell\in \NN_{>0}$ and $\varphi: S \to S$ be the $K$-algebra map defined by $\varphi(x_i)=x_i^{d_i}$ and let $\alpha$ be the restriction of $\varphi$ to $R$.  Let  $J$ be an ideal of $S$ and   $I=J\cap  R$.   Set 
 $$I'=\alpha(I)R \mbox{  and }J'=\varphi(J)S.$$
 
   Let $<$  be the lexicographic order  associated with  $x_\ell>x_{\ell-1}>\dots >x_1$. Recall that $<$ is an elimination  term order for the variables $x_{n+1}, \dots, x_\ell$. In particular, 
if $G$ is a Gr\"obner basis of $J$ with respect to $<$, then $G\cap R$ is a Gr\"obner basis of $I$ with respect to $<$ restricted to  $R$, see for example \cite[Thm. 2, Sect. 3.1]{CLO}.  

\begin{lemma}\label{poweli}
With the notation above we have: 
\begin{itemize}
\item[{\rm (i)}] If $G$ is a Gr\"obner basis of $J$ with respect to $<$, then  $\varphi(G)=\{ \varphi(g) : g\in G\}$ is a Gr\"obner basis of $J'$ with respect to $<$. 
\item[{\rm (ii)}] $I'=J'\cap R$.
\end{itemize}
\end{lemma}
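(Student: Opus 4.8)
The plan is to exploit that $\varphi$ is a monomial substitution compatible with the term order $<$. Writing a monomial as $x^a$ with $a\in\NN^\ell$, we have $\varphi(x^a)=x^{(d_1a_1,\dots,d_\ell a_\ell)}$, so $\varphi$ is injective on monomials. Moreover, since each $d_i>0$, if the exponent vectors of two monomials first differ in the $k$-th coordinate (reading from $x_\ell$ down to $x_1$), then so do the exponent vectors of their $\varphi$-images, with the same sign of the difference; hence $\varphi$ is strictly order-preserving on monomials, and therefore $\init_<(\varphi(f))=\varphi(\init_<(f))$ (with the same leading coefficient) for every $0\neq f\in S$. Finally, since $\operatorname{lcm}$ of monomials is taken coordinatewise on exponents and each $d_i>0$, one has $\varphi(\operatorname{lcm}(\mu,\nu))=\operatorname{lcm}(\varphi(\mu),\varphi(\nu))$ for all monomials $\mu,\nu$.

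For part (i), I would first observe that $\varphi(G)$ generates $J'$: if $f\in J$ then $f=\sum_j h_jg_j$ with $g_j\in G$, whence $\varphi(f)=\sum_j\varphi(h_j)\varphi(g_j)\in\varphi(G)S$, so $\varphi(J)\subseteq\varphi(G)S$ and $J'=\varphi(J)S=\varphi(G)S$. To prove that $\varphi(G)$ is a Gr\"obner basis I would verify Buchberger's criterion. Since $G$ is a Gr\"obner basis of $J$, each $S$-polynomial $S(g_i,g_j)$ admits a standard representation $S(g_i,g_j)=\sum_k h_kg_k$ with $\init_<(h_kg_k)\le\init_<(S(g_i,g_j))$. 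Using the three facts above (for the leading monomials and the least common multiple entering the $S$-polynomial) one checks that $\varphi(S(g_i,g_j))=S(\varphi(g_i),\varphi(g_j))$, and that $\sum_k\varphi(h_k)\varphi(g_k)$ is a standard representation of it, because $\init_<(\varphi(h_k)\varphi(g_k))=\varphi(\init_<(h_kg_k))\le\varphi(\init_<(S(g_i,g_j)))=\init_<(S(\varphi(g_i),\varphi(g_j)))$; when $S(g_i,g_j)=0$ injectivity of $\varphi$ gives $S(\varphi(g_i),\varphi(g_j))=0$. Hence $\varphi(G)$ passes Buchberger's test and is a Gr\"obner basis of $\varphi(G)S=J'$.

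For part (ii), I would apply the elimination property recalled just before the lemma twice. By (i), $\varphi(G)$ is a Gr\"obner basis of $J'$ with respect to $<$; since $<$ eliminates $x_{n+1},\dots,x_\ell$, the set $\varphi(G)\cap R$ is a Gr\"obner basis of $J'\cap R$, in particular a generating set of $J'\cap R$. Because $\varphi$ is injective on monomials, $\varphi(g)\in R$ if and only if $g$ involves only $x_1,\dots,x_n$, i.e. $g\in R$; and on $R$ the map $\varphi$ is $\alpha$, so $\varphi(G)\cap R=\alpha(G\cap R)$. On the other hand $G\cap R$ is a Gr\"obner basis, hence a generating set, of $I=J\cap R$ (again by the elimination property), and exactly as in part (i) the set $\alpha(G\cap R)$ then generates $\alpha(I)R=I'$. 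Thus $I'$ and $J'\cap R$ share the generating set $\alpha(G\cap R)$, and $I'=J'\cap R$.

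The only real content is the first paragraph: checking that $\varphi$ is strictly order-preserving on monomials and compatible with $\operatorname{lcm}$, and that these make a standard representation of an $S$-polynomial transform into a standard representation; once this bookkeeping is in place, Buchberger's criterion disposes of (i) and the elimination property disposes of (ii). Structurally, what is happening is that $\varphi$ is an isomorphism of $S$ onto $T=K[x_1^{d_1},\dots,x_\ell^{d_\ell}]$, over which $S$ is $T$-free on the monomials $x^a$ with $0\le a_i<d_i$; writing $J'=\bigoplus_b\varphi(J)\,b$ one can also read off $\init_<(J')$ directly, which gives an alternative to the Buchberger computation in (i).
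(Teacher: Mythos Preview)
Your proof is correct and follows essentially the same route as the paper: you establish that $\varphi$ is strictly order-preserving on monomials (hence $\init(\varphi(f))=\varphi(\init(f))$), deduce $\varphi(S(g_i,g_j))=S(\varphi(g_i),\varphi(g_j))$ and that standard representations are preserved, conclude (i) via Buchberger's criterion, and then obtain (ii) by comparing the generating sets $\alpha(G\cap R)$ and $\varphi(G)\cap R$ via elimination. The paper proceeds identically, only phrasing Buchberger's criterion in terms of division with remainder~$0$ rather than standard representations; your extra remarks on $\operatorname{lcm}$-compatibility and the free $T$-module description of $S$ are more detailed but not a different strategy.
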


\begin{proof}
Firstly we observe that, since we deal with the lex order,  for every pair  of monomials $\tau$ and $\sigma$ of $S$, we have $\tau < \sigma$ if and only if $\varphi(\tau)<\varphi(\sigma)$. In particular, 
$\ini(\varphi(f))=\varphi(\ini(f))$ for any non-zero $f\in S$.  
Secondly,   we  observe that $\varphi$ is compatible with the Buchberger criterion.  Indeed denoting  by $S(f,h)$ the $S$-polynomial of two polynomials $f$ and $h$ of $S$, one has that $\varphi(S(f,h))=S(\varphi(f), \varphi(h))$. Furthermore  if $h=\sum_{g\in G}  p_g g$ is a division with remainder $0$ of $h$ with respect to $G$, then $ \varphi(h)=\sum_{g\in G}  \varphi(p_g)\varphi(g)$ is a division with remainder $0$ of $\varphi(h)$  with respect to $\varphi(G)$. By the Buchberger criterion \cite{E}, this is enough to conclude that  (i) holds. 

To prove (ii)  we observe that  $I$ is generated by $G\cap R$ so that $I'$ is generated by the set  $\alpha(G \cap R)$.  On the other hand, by (i),  $J'\cap R$ is generated by the set 
 $\varphi(G) \cap R$.  Clearly  $\alpha(G \cap R)=\varphi(G) \cap R$ and hence it follows  that $I'=J'\cap R$.  
\end{proof}

\subsection{Regularity and elimination} 
We keep the notation above, i.e.~$\ell\geq n$ and  $S=K[x_1,\dots,x_\ell]\supseteq R=K[x_1,\dots, x_n]$. Let $J$ be a homogeneous ideal of $S$ and $I=J\cap R$. In general  it can happen that   $\reg(I)>\reg(J)$ or   $\reg(I)<\reg(J)$. 

\begin{example} 
Let  $J=(x_1^2, x_2^2)$ and $I=J\cap K[x_1]=(x_1^2)$.  Then  $\reg(J)=3$  and $\reg(I)= 2$. 
Let $J=(x_1x_2 + x_2x_3, x_1x_3, x_3^2)$ and  $I=J\cap K[x_1,x_2]= (x_1^2x_2)$.  Then  $\reg(J)=2$  and $\reg(I)= 3$. 
\end{example} 

However we can get a bound for  $\reg(I)$ in terms of  the regularity of the lexicographic ideal $\Lex(J)$ 
of the ideal $J$.  We refer the reader to  \cite{BH} for generalities on the  lexicographic ideal $\Lex(J)$. Here we just recall that $\Lex(J)$ is defined as $\oplus_{i\in \NN} \Lex(J_i)$ where $\Lex(J_i)$ is the $K$-vector space generated by the largest  $\dim J_i$ monomials of degree $i$ with respect to the lexicographic order. Macaulay proved that $\Lex(J)$ is actually an ideal of $S$ and Bigatti~\cite{B}, Hulett \cite{Hu} and Pardue \cite{P} proved that $\beta_{ij}(J)\leq \beta_{ij}(\Lex(J))$ for all $i,j$. In particular one has $\reg(J)\leq \reg (\Lex(J))$. 

\begin{proposition} 
\label{regbound} 
With the notations above we have  $\reg(I)\leq \reg(\Lex(J))$.
\end{proposition}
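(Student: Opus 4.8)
The plan is to route the inequality through the initial ideal of $J$ with respect to the lexicographic order $<$ (with $x_\ell > \cdots > x_1$), using two facts: passing to an initial ideal cannot decrease regularity, and $<$ is an elimination order. First I would apply the standard comparison $\reg(I)\le\reg(\operatorname{in}_{<}(I))$, valid for any term order (here the restriction of $<$ to $R$), which follows from the upper semicontinuity of the graded Betti numbers along the Gröbner degeneration of $I$ to $\operatorname{in}_{<}(I)$.

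Next I would identify $\operatorname{in}_{<}(I)$ with $\operatorname{in}_{<}(J)\cap R$. Since $<$ is an elimination order for $x_{n+1},\dots,x_\ell$, a Gröbner basis $G$ of $J$ restricts to a Gröbner basis $G\cap R$ of $I$, so $\operatorname{in}_{<}(I)=(\operatorname{in}_{<}(g):g\in G\cap R)$, which is contained in $\operatorname{in}_{<}(J)\cap R$. For the reverse inclusion one uses the defining feature of this lex order: every monomial involving one of $x_{n+1},\dots,x_\ell$ is $<$-larger than every monomial lying in $R$. Hence if a monomial $m\in R$ lies in $\operatorname{in}_{<}(J)$, writing $m=m'\operatorname{in}_{<}(g)$ with $g\in G$, the divisibility $\operatorname{in}_{<}(g)\mid m$ forces $\operatorname{in}_{<}(g)\in R$, and since $\operatorname{in}_{<}(g)$ is the $<$-leading monomial of $g$ this in turn forces $g\in R$; thus $g\in G\cap R$ and $m\in\operatorname{in}_{<}(I)$.

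The heart of the argument is the inequality $\reg_R(M\cap R)\le\reg_S(M)$ for an arbitrary monomial ideal $M\subseteq S$, which I would apply to $M=\operatorname{in}_{<}(J)$. I would prove it by taking the $\ZZ^\ell$-graded minimal free resolution of $M$ over $S$ and keeping only the components sitting in multidegrees $b\in\NN^\ell$ supported on $\{1,\dots,n\}$. Because the differentials are monomial matrices, an entry from a summand $S(-a)$ to a summand $S(-a')$ is a scalar multiple of $x^{a-a'}$, zero unless $a'\le a$, so this restriction is a subcomplex; each summand $S(-a)$ with $a$ supported on $\{1,\dots,n\}$ contributes a copy of the free $R$-module it generates, the others contribute $0$, and the surviving differential entries are monomials of $R$ lying in $\mm_R$. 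Since taking a fixed multidegree is exact, the resulting complex of free $R$-modules is acyclic and its $0$-th homology is $\bigoplus M_b$ over the multidegrees $b$ supported on $\{1,\dots,n\}$, which equals $M\cap R$ because each multidegree contains a single monomial. Thus it is the minimal free resolution of $M\cap R$ over $R$, whence $\beta^R_{i,j}(M\cap R)\le\beta^S_{i,j}(M)$ for all $i,j$, and in particular $\reg_R(M\cap R)\le\reg_S(M)$.

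Finally I would assemble the chain $\reg(I)\le\reg(\operatorname{in}_{<}(I))=\reg(\operatorname{in}_{<}(J)\cap R)\le\reg(\operatorname{in}_{<}(J))$, and then, since $\operatorname{in}_{<}(J)$ has the same Hilbert function as $J$ so that $\Lex(\operatorname{in}_{<}(J))=\Lex(J)$, conclude $\reg(\operatorname{in}_{<}(J))\le\reg(\Lex(\operatorname{in}_{<}(J)))=\reg(\Lex(J))$ by the Bigatti--Hulett--Pardue theorem recalled above. The step I expect to require the most care is the monomial-ideal inequality $\reg_R(M\cap R)\le\reg_S(M)$ — in particular verifying that the restricted complex really is a (minimal) free resolution of $M\cap R$ and not merely a complex mapping onto it — although the elimination-order bookkeeping in the second step must also be carried out precisely.
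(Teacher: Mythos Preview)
Your argument is correct and follows exactly the same chain as the paper's proof:
\[
\reg(I)\le\reg(\ini_<(I))=\reg(\ini_<(J)\cap R)\le\reg(\ini_<(J))\le\reg(\Lex(\ini_<(J)))=\reg(\Lex(J)),
\]
with the last step by Bigatti--Hulett--Pardue. The only difference is in the middle inequality $\reg_R(\ini_<(J)\cap R)\le\reg_S(\ini_<(J))$: the paper dispatches it by citing \cite[Cor.~2.5]{OHH}, observing that $R/\ini(I)$ is an algebra retract of $S/\ini(J)$, whereas you supply a direct self-contained proof by restricting the $\ZZ^\ell$-graded minimal free resolution of the monomial ideal to multidegrees supported on $\{1,\dots,n\}$. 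Your multigraded restriction argument is correct (and is in essence a hands-on proof of the relevant special case of the retract result), so what you gain is independence from the external reference at the cost of a paragraph of bookkeeping; the paper's version is shorter but relies on the cited lemma.
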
 

\begin{proof}  Let $G$ be a   Gr\"obner basis of  $J$ with respect to the lex order with $x_\ell>x_{\ell-1}>\dots>x_1$. The elements of $G$ that belong to $R$  form a Gr\"obner basis  of  $I$. In particular $\ini(I)=\ini(J)\cap R$. It is  known that  $\reg(I)\leq \reg(\ini(I))$ \cite{E}.  Furthermore  by \cite[Cor.~2.5]{OHH} we have $\reg(\ini(I))\leq \reg(\ini(J))$ since $R/\ini(I)$ is an algebra retract of $S/\ini(J)$. Finally  $\Lex(J)=\Lex(\ini(J))$ because $J$ and $\ini(J)$ have the same Hilbert function and $\reg(\ini(J))\leq \reg(\Lex(J))$ by the Bigatti-Hulett-Pardue theorem mentioned above. Summing up, 
$$\reg(I)\leq \reg(\ini(I))\leq \reg(\ini(J))\leq \reg(\Lex(J))$$
and we are done. 
\end{proof}

\section{Proof of the main Theorem}
We have collected all the ingredients for the proof of Theorem~\ref{main}.


\begin{proof}[Proof of  Theorem \ref{main}] One has $P=J \cap K[x_1,\dots,x_n]$ 
with $J=(x_i-f_i : i=1,\dots, n)\subset S=K[x_1,\dots, x_n, y_1,\dots, y_m]$. Consider   $\varphi:S\to S$   defined by $\varphi(x_i)=x_i^d$  and $\varphi(y_i)=y_i$ and let $\alpha: R\to R$ be the restriction of $\varphi$ to $R$, so that $\alpha(x_i)=x_i^d$. 

In this setting, we have $J'=\varphi(J)S=(x_i^d-f_i : i=1,\dots, n)$ and $P'=\alpha(P)R$. The ideal $J'$ is   a complete intersection of $n$ forms of degree $d$ and dimension $m$ so its Hilbert function and hence $\Lex(J')$  just depend on $n,d,m$. 
Set $G_{n,d,m}=\reg(\Lex(J'))$. 
 
By  Lemma \ref{poweli}(ii) we have $P'=J' \cap R$ and thanks to Proposition \ref{regbound}: 
$$ \reg(P')\leq G_{n,d,m}.
$$
Since by Lemma \ref{regflat} $\reg(P)\leq \reg(P')/d$ we conclude
$$\reg(P)\leq G_{n,d,m}/d.$$
Taking into account that by \cite[Corollary 3.4]{CM} we have  $G_{n,d,m}\leq d^{n2^{m-1}}$,   we obtain the desired inequality. 
\end{proof}


\providecommand{\bysame}{\leavevmode\hbox to3em{\hrulefill}\thinspace}
\providecommand{\MR}{\relax\ifhmode\unskip\space\fi MR }
\providecommand{\MRhref}[2]{%
  \href{http://www.ams.org/mathscinet-getitem?mr=#1}{#2}
}
\providecommand{\href}[2]{#2}

\end{document}